\def\mL{L\kern-0.08cm\char39}
\newtheorem{thm}{Theorem}[section]
\newtheorem{cor}[thm]{Corollary}
\theoremstyle{definition}
\theoremstyle{remark}
\newtheorem{remark}[thm]{\bf Remark}
\begin{document}
\title{More on products of Baire spaces}
\author{Rui Li and L\'aszl\'o Zsilinszky}
\address{School of Statistics and Mathematics, Shanghai Finance University, Shanghai 201209, China}
\email{lir@sfu.edu.cn}

\address{Department of Mathematics and Computer Science, The University of
North Carolina at Pembroke, Pembroke, NC 28372, USA}
\email{laszlo@uncp.edu}

\subjclass[2010]{Primary 91A44; Secondary 54E52, 54B10}

\keywords{strong Choquet game, Banach-Mazur game, (hereditarily) Baire space, Krom space}

\begin{abstract} New results on the Baire product problem are presented. It is shown that an arbitrary product of almost locally ccc Baire spaces is Baire; moreover, the product of a Baire space and  a 1st countable space which is $\beta$-unfavorable in the strong Choquet game is Baire.
\end{abstract}

\maketitle

\section{introduction}
A topological space is a  {\it Baire space}
provided countable intersections of dense open subsets are dense \cite{HMC}. If the product $X\times Y$ is Baire, then $X,Y$ must  be Baire; however, the converse is not true in general. Indeed,
 {\it Oxtoby}  \cite{Ox} constructed, under CH, a Baire space with a non-Baire square, and
various absolute examples followed (see  \cite{Co}, \cite{FK}, \cite{Pol},  \cite{PVM}).
As a result, there has been a considerable effort to find various completeness properties for
the coordinate spaces  to get Baireness of the product (cf. \cite{KU}, \cite{Fr}, \cite{Ox},  \cite{HMC}, \cite{AL}, \cite{Wh2}, \cite{FK}, \cite{Pol}, \cite{FNR}, \cite{Zs0}, \cite{ChP}, \cite{Mo}, \cite{LM}). There have been two successful approaches in solving the product problem: given Baire spaces $X,Y$, either one adds some condition to $Y$ (such as 2nd countability \cite{Ox}, the uK-U property \cite{FNR}, having a countable-in-itself $\pi$-base \cite{Zs0}), or strengthens completeness of $Y$ (to \v Cech-completeness, (strong) $\alpha$-favorability \cite{AL},\cite{Wh2}, or more recently, to hereditary Baireness \cite{ChP},\cite{Mo},\cite{LM}).
It is the purpose of this paper to generalize these product theorems, as well as, show how a new  fairly weak completeness property of {\it $\beta$-unfavorability in the strong Choquet game } \cite{Po},\cite{De},\cite{Te} can be added to the list of spaces giving a Baire product.

Since Baire spaces can be characterized via the Banach-Mazur game, it is not surprising that topological games have been applied to attack the Baire product problem. Our results continue in this line of research (precise definitions will be given in the next section); in the games two players take countably many  turns in choosing objects from a topological space $X$: in the {\it strong Choquet game} \cite{Ch, Ke} player $\beta$ starts, and always chooses an open set $V$ and a point $x\in V$, then player $\alpha$ responds by choosing  an open set $U$ such that $x\in U\subseteq V$, next $\beta$ chooses an open set $V'$ and a point $x'$ with $x'\in V'\subseteq U$, etc.  Player $\alpha$ wins if the intersection of the chosen open sets is nonempty, otherwise, $\beta$ wins.

The strong Choquet game  provides a useful unifying platform for studying completeness-type properties, as the following two celebrated theorems demonstrate in a metrizable space $X$:
\begin{itemize}
\item $Ch(X)$ is $\alpha$-favorable if and only if $X$ is completely metrizable \cite{Ch},
\item $Ch(X)$ is $\beta$-unfavorable  if and only if $X$ is hereditarily Baire (i.e. the nonempty {\it closed} subspaces of $X$ are Baire) \cite{De,Te,Po}.
\end{itemize}

The {\it Banach-Mazur game} $BM(X)$ \cite{HMC} (also called the {\it Choquet game} \cite{Ke})  is played as $Ch(X)$, except that both $\beta,\alpha$ choose open sets only.  In a topological space $X$, $BM(X)$ is $\beta$-unfavorable iff  $X$ is a {\it Baire space} \cite{Ox2,Kr2,SR}; consequently, if $BM(X)$ is $\alpha$-favorable, then $X$ is a Baire space.

To put our results in perspective, recall that $X\times Y$ is a Baire space if $X$ is a Baire topological space and
\begin{itemize}
\item either $Y$ is a topological space such that $BM(Y)$ is $\alpha$-favorable (in particular, if $Ch(Y)$ is $\alpha$-favorable) \cite{Wh2},
\item or $Y$ is a hereditarily Baire space which is metrizable \cite{Mo}, or more generally, 1st countable $T_3$ space \cite{LM}.
\end{itemize}

Since there are spaces which are $\alpha$-favorable in the strong Choquet game but are not hereditarily Baire (the Michael line \cite{En}), as well as metric hereditarily Baire spaces, which are not $\alpha$-favorable in the Banach-Mazur game (a Bernstein set \cite{En}), being $\beta$-unfavorable in the strong Choquet game is distinct from both hereditary Baireness as well as being $\alpha$-favorable in the strong Choquet game, thus, it is natural to ask the status of this property in the Baire product problem. Our main result in Section 3 (Theorem \ref{prod}) implies the following:

\begin{thm}\label{ch}
Let $X$ be a Baire space, $Y$ be a 1st countable topological space such that  $Ch(Y)$ is $\beta$-unfavorable. Then $X\times  Y$ is a Baire space.
\end{thm}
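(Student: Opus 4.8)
The plan is to characterize Baireness of $X\times Y$ via $\beta$-unfavorability of the Banach--Mazur game $BM(X\times Y)$, and to manufacture a winning strategy for $\alpha$ (or at least defeat every strategy of $\beta$) by running, in parallel, the assumed completeness of each coordinate. Precisely, I would fix an arbitrary strategy $\sigma$ for $\beta$ in $BM(X\times Y)$ and produce a $\sigma$-play whose intersection is nonempty. At each stage $\beta$'s move is a basic open box $V_n\times W_n$; projecting onto the two coordinates yields a descending sequence of open sets in $X$ and in $Y$ respectively. The $Y$-side is the delicate one, so the idea is to use $1$st countability of $Y$ to pin down a point: along the play we also choose points $y_n\in W_n$ witnessing a strong-Choquet-type run in $Y$, and since $Ch(Y)$ is $\beta$-unfavorable we arrange that $\bigcap_n W_n\ne\emptyset$ by forcing that run to be a loss for (an auxiliary) $\beta$ in $Ch(Y)$. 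On the $X$-side we only have Baireness, i.e.\ $\beta$-unfavorability of $BM(X)$, so there we exploit that $X$ is Baire to keep $\bigcap_n V_n\ne\emptyset$ after the $Y$-coordinate has been stabilized.

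Concretely, the key steps, in order. First, set up the unfolded/auxiliary game: a play of $BM(X\times Y)$ against $\sigma$ is simulated by a simultaneous play of $BM(X)$ and of $Ch(Y)$, where $\alpha$'s answer $U_n\times G_n$ in the product is assembled from $\alpha$'s answers in the factor games. Second, on the $Y$-factor: because $Y$ is $1$st countable, fix at each step a point $y_n$ and a local base, and have the auxiliary $\beta$ in $Ch(Y)$ play $(W_n,y_n)$; since $\beta$ is unfavorable in $Ch(Y)$, there is a counter-strategy for $\alpha$ (equivalently: no $\beta$-strategy wins), and the resulting run has $\bigcap_n G_n\ne\emptyset$, in fact there is a cluster point $y^\ast$ of $(y_n)$ lying in every $W_n$. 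Third, on the $X$-factor: the points $y_n$ (or the stabilized $y^\ast$) feed back into $\beta$'s product strategy $\sigma$, turning $\sigma$ into a strategy $\tau$ for $\beta$ in $BM(X)$; since $X$ is Baire, $\tau$ is not winning, so some $\tau$-play has $\bigcap_n V_n\ne\emptyset$. Fourth, amalgamate: choose the $X$-play and the $Y$-play coherently so that together they are a legal $\sigma$-play in $X\times Y$, and conclude $\bigcap_n (V_n\times W_n)\supseteq(\bigcap_n V_n)\times\{y^\ast\}\ne\emptyset$, whence $\sigma$ is not winning for $\beta$ and $X\times Y$ is Baire.

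The main obstacle is the coupling between the two factor games: $\beta$'s product strategy $\sigma$ does not decompose as a pair of independent factor strategies, because $\sigma$'s next box depends on the \emph{whole} previous product move, and in particular on how $\alpha$ split her answer between the coordinates. So one cannot literally ``project $\sigma$'' to get $\tau$ and an auxiliary $Ch(Y)$-strategy at the same time; instead one has to interleave the two simulations, deciding the $Y$-coordinate move first (using the $Ch(Y)$-counterstrategy, which is where $1$st countability lets us commit to points $y_n$ converging to a usable $y^\ast$), and only then resolving the $X$-coordinate move, so that when we finally invoke Baireness of $X$ the relevant $\beta$-strategy $\tau$ in $BM(X)$ is well defined. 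Making this interleaving precise — so that the pasted-together sequence of moves is simultaneously a legal $\sigma$-play, a legal $Ch(Y)$-run against the fixed counterstrategy, and a legal $\tau$-play in $BM(X)$ — and checking that $\tau$ is a genuine (single-valued, legal) strategy is the technical heart; this is presumably exactly what Theorem~\ref{prod} and its proof are engineered to handle in the needed generality, and Theorem~\ref{ch} will then follow by specializing to $1$st countable $Y$ with $Ch(Y)$ $\beta$-unfavorable.
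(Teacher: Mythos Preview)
Your outline has the right architecture but contains a genuine error that makes the proposed decoupling fail. You write ``there is a counter-strategy for $\alpha$ (equivalently: no $\beta$-strategy wins)'': these are \emph{not} equivalent; $\beta$-unfavorability of $Ch(Y)$ is strictly weaker than $\alpha$-favorability, and the hypothesis is only the former. Your interleaving scheme (``decide the $Y$-coordinate move first, using the $Ch(Y)$-counterstrategy'') presupposes a strategy for $\alpha$ in $Ch(Y)$ to consult at each stage. Without one you cannot commit to $Y$-moves before $X$-moves: to invoke $\beta$-unfavorability you must first exhibit a single, fully specified $\beta$-strategy in $Ch(Y)$, and that strategy cannot be defined until the $X$-moves (on which the product strategy $\sigma$ depends) are already fixed. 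So the order $Y$-then-$X$ is circular exactly at the point you flag as the ``main obstacle''.

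The paper breaks the circularity in the opposite order, and the missing idea is a \emph{tree of $Y$-positions}. Working directly with a decreasing sequence of dense open $O_n\subseteq X\times Y$, it first builds a $\beta$-strategy $\sigma_X$ in $BM(X)$ which at stage $n$ shrinks inside $O_n$ simultaneously for all $2^n$ currently alive $Y$-branches, indexed by a tree $T\subset\omega^{<\omega}$; since in $BM(X)$ only open sets are played, $\beta$ can afford these finitely many successive refinements. First countability (via the $W$-point property) is used to arrange that the points $y_t$ chosen along $T$ converge to their source node, so that later any move of $\alpha$ in $Ch(Y)$ can be absorbed by jumping to a suitable successor in $T$. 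Baireness of $X$ yields one $\sigma_X$-play with some $x\in\bigcap_n U_n$; only now, with $x$ and the whole tree $(y_t,V_t)_{t\in T}$ fixed, is a $\beta$-strategy $\sigma_Y$ in $Ch(Y)$ defined by navigating $T$, and $\beta$-unfavorability of $Ch(Y)$ gives a losing run and hence the desired $y$. Theorem~\ref{ch} then follows immediately from Theorem~\ref{prod}, since every point of a first-countable space is a $W$-point.
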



The proof works for finite products, but it does  not naturally extend to  infinite products, so we will separately consider the infinite product case in Section 4, using the idea of a {\it Krom space} (\cite{Kr1},\cite{GFG}), to obtain:

\begin{thm}\label{iprod}
Let $I$ be an index set, and  $X_i$ be an almost locally ccc Baire space (defined in Section 3) for each $i\in I$ . Then $\prod_i X_i$ is a Baire space.
\end{thm}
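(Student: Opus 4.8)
The plan is to bootstrap the finite‑product machinery of Section~3 up to arbitrary products by means of Krom spaces, and then to run a Banach–Mazur game argument in which the countable chain condition controls the \emph{width} of the game tree while Baireness of the factors controls the \emph{length} of the surviving play.

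First, if convenient, I would reduce to a countable index set: a run of $BM(\prod_{i\in I}X_i)$ in which $\alpha$ always refines to a basic open set touches only finitely many coordinates per move, so, managing the set of active coordinates dynamically, a winning strategy for $\beta$ on the full product would restrict to one for $\beta$ on $\prod_{i\in J}X_i$ for a suitable countable $J\subseteq I$; since each such subproduct is again a product of almost locally ccc Baire spaces, it suffices to treat $I=\omega$ (alternatively one sets up the Krom space directly over the tree of finite subsets of $I$). Next, to the sequence $\langle X_n\rangle_n$ associate its Krom space $K$ (\cite{Kr1},\cite{GFG}) — roughly, the tree $\bigcup_n\prod_{i<n}X_i$ of finite partial products under a suitable topology — which has the defining feature that $\prod_n X_n$ is Baire if and only if $K$ is Baire. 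Thus the problem reduces to showing that $\beta$ has no winning strategy in $BM(K)$.

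The core is this game on $K$. Given a $\beta$‑strategy $\sigma$ and a nonempty open $W\subseteq K$, I would build a tree of $\sigma$‑plays issuing from a refinement of $W$, arranging two things at every node. For \emph{width}: before forming $\alpha$'s responses I have $\alpha$ shrink $\beta$'s current move into a ccc open set — legitimate since, by the definition of almost local ccc, such sets are dense — and then pass to a \emph{countable} maximal antichain of $\alpha$‑moves below it; this keeps the tree countably branching and is exactly where the ccc hypothesis is spent (it also requires checking that $K$ inherits enough of the almost‑locally‑ccc structure of the $X_n$, which means unwinding the Krom topology). For \emph{survival}: I also force $\beta$'s moves to enter whatever prescribed dense open subset of $K$ is under examination, so that any branch whose open sets have nonempty intersection yields a point of $W$ in that dense open set. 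It then remains to produce such a branch: here one feeds in the Baireness of each coordinate $X_n$ — the Krom space being engineered precisely so that a ``generic'' branch exists once every factor is Baire — together with the finite‑product results of Section~3 (cf.\ Theorem~\ref{prod}) to deal with branches along which the level in the Krom tree stabilizes (or those are ruled out outright by the construction), all via a fusion/diagonalization down the countably branching tree.

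The main obstacle is this last step and, inside it, the simultaneous juggling of the two hypotheses: ccc is what makes the tree of plays countably branching, whereas Baireness of the $X_n$ is what guarantees that some branch has length $\omega$ with nonempty intersection, and the construction must honor both constraints at every level at once rather than one after the other. A secondary, unavoidable technical point is verifying that the Krom space of almost locally ccc spaces is again almost locally ccc, so that the countable‑antichain step is available; this rests on a concrete analysis of the Krom topology on the tree of finite partial products.
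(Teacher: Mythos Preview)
Your proposal diverges from the paper's argument in two essential ways, and the second is a genuine gap.

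First, the Krom space you describe is not the one used here. You take a single object $K$ associated to the whole sequence, ``roughly, the tree $\bigcup_n\prod_{i<n}X_i$ of finite partial products'', and claim that $\prod_n X_n$ is Baire iff $K$ is. That is not the Krom construction of this paper: $\mathcal K(Y)$ is attached to a \emph{single} space $Y$ and consists of decreasing $\omega$-sequences of nonempty open subsets of $Y$ with nonempty intersection, topologized as a subspace of $\tau_Y^{\omega}$. The paper never forms a Krom space of the sequence; it forms $\mathcal K(X_i)$ coordinatewise and works with the product $\prod_{i\in I}\mathcal K(X_i)$. The link back to $\prod_{i\in I}X_i$ is Theorem~\ref{krominfi} (the arbitrary-index version of Krom's theorem), proved separately. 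Your reduction to countable $I$ and your tree of partial products are therefore both unnecessary.

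Second, and more seriously, the part you flag as ``the main obstacle'' is exactly where the paper does \emph{not} argue directly but instead cashes in an existing product theorem. The key computation (already done in the proof of Theorem~\ref{krom}) is that if $Y$ is almost locally ccc then $\mathcal K(Y)$ is a \emph{metric} Baire space with a \emph{countable-in-itself $\pi$-base}: one shrinks a basic $[f]$ into $[f^{\frown}U]$ with $U$ ccc, and then any pairwise disjoint family of basic subsets of $[f^{\frown}U]$ projects to a pairwise disjoint family of open subsets of $U$, hence is countable. Once each $\mathcal K(X_i)$ has a countable-in-itself $\pi$-base, the arbitrary product $\prod_{i\in I}\mathcal K(X_i)$ is Baire by \cite[Theorem~5]{Zs0} (the infinite version of Oxtoby's theorem), and Theorem~\ref{krominfi} transfers this back to $\prod_{i\in I}X_i$. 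Your fusion/diagonalization sketch, by contrast, leaves the crucial existence step --- ``a `generic' branch exists once every factor is Baire'' --- unproved; substantiating it would amount to reproving the cited product theorem inside your argument, and invoking Theorem~\ref{prod} for ``branches along which the level stabilizes'' does not address the genuinely infinite branches. So the proposal has the right instinct (pass to Krom spaces, exploit ccc there) but misidentifies the Krom object, and the decisive step is asserted rather than carried out.
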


\section{Preliminaries}

Unless otherwise noted, all spaces are topological spaces. As usual, $\omega$ denotes the non-negative integers, and $n\ge 1$ will be considered as sets of predecessors $n=\{0,\dots,n-1\}$. Let $\mathcal B$ be a base for a topological space $X$, and denote \[\mathcal E=\mathcal E(X)=\mathcal E(X,\mathcal B)=\{(x,U)\in X\times \mathcal B: x\in U\}.\]
In the {\it strong Choquet game} $Ch(X)$ players $\beta$ and $\alpha$ alternate in choosing
$(x_n,V_n)\in \mathcal E$ and $U_n\in\mathcal B$, respectively, with $\beta$ choosing first, so that
for each $n<\omega$, $x_n\in U_n\subseteq V_n$, and $V_{n+1}\subseteq U_n$. The play
\[(x_0,V_0),U_0,\dots,(x_n,V_n),U_n,\dots\]
is won by $\alpha$, if $\bigcap_{n} U_n (=\bigcap_{n} V_n)\neq\emptyset$; otherwise, $\beta$ wins.

A {\it strategy} in $Ch(X)$ for $\alpha$ (resp. $\beta$) is a function $\sigma: \mathcal E^{<\omega}\to\mathcal B$
(resp. $\sigma: \mathcal B^{<\omega}\to\mathcal E$) such that
$x_n\in \sigma((x_0,V_0),\dots,(x_n,V_n))\subseteq V_n$ for all
$((x_0,V_0),\dots,(x_n,V_n))\in \mathcal E^{<\omega}$ (resp. $\sigma(\emptyset)=(x_0,V_0)$ and
$V_n\subseteq U_{n-1}$, where
$\sigma(U_{0},\dots,U_{n-1})=(x_n,V_n)$, for all
$(U_{0},\dots,U_{n-1})\in \mathcal B^n$, $n\ge 1$).
A strategy $\sigma$ for $\alpha$ (resp. $\beta$) is a {\it winning strategy},
if $\alpha$ (resp. $\beta$) wins every play of $Ch(X)$
compatible with $\sigma$, i.e. such that  $\sigma((x_0,V_0),\dots,(x_n,V_n))=U_n$ for all $n<\omega$
(resp. $\sigma(\emptyset)=(x_0,V_0)$ and $\sigma(U_{0},\dots,U_{n-1})=(x_n,V_n)$ for all $n\ge 1$).
 We will say that {\it $Ch(X)$ is $\alpha$-, $\beta$-favorable}, respectively, provided $\alpha$, resp. $\beta$, has a winning strategy in $Ch(X)$.

The {\it Banach-Mazur game} $BM(X)$ \cite{HMC}  is played similarly to $Ch(X)$, the only difference is that both $\beta,\alpha$ choose open sets from a fixed $\pi$-base of $X$. Winning strategies, $\alpha$-, and $\beta$-favorability of $BM(X)$ can be defined analogously to $Ch(X)$.

In the {\it Gruenhage game} $G(X)$ \cite{Gr} given a point $x\in X$, at the $n$-th round   Player I picks an open neighborhood $U_n$ of $x$, and Player II chooses $x_n\in U_n$. Player I wins if the sequence $(x_n)_n$ converges to $x$, otherwise, Player II wins; $x$ is a {\it $W$-point} \cite{Sh}, provided Player I has a winning strategy $W_x:X^{<\omega}\to \{\text{open neighborhoods of}\ x\}$ in $G(X)$ at $x$.

Given a topological space $(X,\tau)$, consider the ultrametric space $\tau^{\omega}$, where $\tau$ has the discrete topology.  For every $n<\omega$ denote
\begin{gather*}
\downarrow\tau^n=\{f\in (\tau\setminus \{\emptyset\})^n: f(k+1)\subseteq f(k) \ \text{whenever} \ k\le n\}, \ \text{and}\\
 \downarrow\tau^{\omega}=\{f\in (\tau\setminus \{\emptyset\})^{\omega}: f(k+1)\subseteq f(k) \ \text{whenever} \ k<\omega\}.
\end{gather*}
The {\it Krom space} \cite{Kr1,GFG} of $X$  is defined as
\[
\mathcal K(X)=\{f\in \downarrow\tau^{\omega}: \bigcap_n f(n)\neq\emptyset
\}.
\]
Note that a base of neighborhoods at $f\in \mathcal K(X)$ is $\{[f\restriction_{n+1}]: n<\omega\}$, where
\[
[f\restriction_{n+1}]=\{g\in\mathcal K(X): g\restriction_{n+1}=f\restriction_{n+1}\}.
\]
Put differently, a base for $\mathcal K(X)$ is the family of all sets $[f]$, $f\in\bigcup_n\downarrow\tau^{n}$ where, if $n<\omega$ and  $f\in \downarrow\tau^n$, then
\[
[f]=\{g\in\mathcal K(X): g\restriction _{n+1}=f\}.
\]
Given a base $\mathcal B$ for $X$, we will also consider the following subspace of $\mathcal K(X)$:
\[
\mathcal K_{\mathcal B}^0(X)=\{f\in \mathcal K(X)\cap \mathcal B^{\omega}:  (f(n))_n \ \text{is a neighborhood base at each} \ x\in\bigcap_{n}f(n) \}.
\]
{\it Krom's Theorem} \cite[Theorem 3]{Kr1} states that for  topological spaces $X,Y$, $X\times Y$ is a Baire space iff $X\times \mathcal K (Y)$ is a Baire space iff $\mathcal K(X)\times \mathcal K(Y)$ is a Baire space.

Given a set $S$, $s\in S$, $n<\omega$, and $f\in S^n$, the notation $f^{\frown} s$ stands for the $g\in S^{n+1}$ for which $g\restriction _n=f\restriction _n$, and $g(n)=s$.
\vskip 1pc

\section{Finite Baire products}

\vskip 6pt
 We will say that a space is {\it almost locally ccc}, provided every open set contains an open ccc subspace i.e., if the space has a $\pi$-base of open ccc subspaces. This property is strictly weaker than being {\it almost locally uK-U} \cite{Zs0} (see \cite[Examples 1,2]{FNR}), as well as  {\it having a countable-in-itself $\pi$-base} \cite{Zs0} (i.e., having a $\pi$-base each member of which contains countably many members of said $\pi$-base -- this property is also termed {\it having a locally countable pseudo-base} in \cite{Ox}), which are known to produce Baire products (see \cite[Theorem 2]{Ox}, \cite[Property 1]{FNR} \cite[Proposition 4]{Zs0}). Since these properties all coincide in Baire metric spaces (see \cite[Proposition 3]{Zs0}), a simple observation about Krom spaces immediately yields a generalization of these Baire product theorems:

\begin{thm}\label{krom}
Let $X,Y$ be a Baire spaces, and $Y$  be almost locally ccc. Then $X\times Y$ is a Baire space.
\end{thm}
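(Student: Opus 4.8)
The plan is to use Krom's theorem to push the problem into the metrizable world, where, by \cite[Proposition 3]{Zs0}, being almost locally ccc coincides with having a countable-in-itself $\pi$-base, which is exactly the hypothesis of the product theorems \cite[Theorem 2]{Ox}, \cite[Property 1]{FNR}, \cite[Proposition 4]{Zs0} that one wants to generalize. By Krom's theorem, $X\times Y$ is Baire if and only if $X\times\mathcal K(Y)$ is, and $\mathcal K(Y)$ is metrizable, being a subspace of the ultrametric space $\tau^{\omega}$; it is also a Baire space, since $Y$ is (this is the instance of Krom's theorem with a one-point second factor). So everything reduces to showing that $X\times\mathcal K(Y)$ is Baire for every Baire $X$, with the only extra input being that $Y$ is almost locally ccc.

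The observation that I expect to do the work is that ccc-ness of a member of a $\pi$-base of $Y$ turns into a countability statement at the level of $\mathcal K(Y)$: if $\mathcal C$ is a $\pi$-base of $Y$ consisting of ccc open sets, then for each $C\in\mathcal C$ one may fix a \emph{countable} maximal pairwise disjoint family $\mathcal C_C$ of open subsets of $C$ lying in $\mathcal C$, whose union is dense in $C$, and such families, attached along the coordinates of the defining sequences, keep the ``branching'' of the tree $\pi$-base of (the relevant part of) $\mathcal K(Y)$ countable. The target is to extract from this a dense subspace $D$ of $\mathcal K(Y)$ --- consisting of points whose defining sequence eventually runs through $\mathcal C$ following the chosen countable families --- which is Baire and has a countable-in-itself $\pi$-base; then $X\times D$ is Baire by \cite[Theorem 2]{Ox}, and, since $D$ is dense in $\mathcal K(Y)$, $X\times D$ is dense in $X\times\mathcal K(Y)$, so the latter is Baire because a space with a dense Baire subspace is itself Baire, and Krom's theorem finishes the argument.

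The main obstacle, I expect, is making this ``ccc becomes countable branching'' mechanism genuinely work. The naive definition of $D$ is not adequate: a point of $\mathcal K(Y)$ can ``re-enter'' $\mathcal C$ at a later coordinate through an arbitrary ccc open set, which restores uncountable branching --- and in fact a Bernstein-set computation shows that $\mathcal K(Y)$, and even every dense subspace of it, may fail to have a countable-in-itself $\pi$-base although $Y$ is ccc, so the subspace must be cut out with a genuinely canonical entry into $\mathcal C$ before its $\pi$-base is controlled, and one may well have to replace ``countable-in-itself $\pi$-base'' by a different invariant of $\mathcal K(Y)$ (hereditary Baireness being the natural one, since in metric spaces it is exactly $\beta$-unfavorability of $Ch$, for which Krom's tree structure is well suited) that $Y$ being almost locally ccc and Baire really does force. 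The second delicate point is the Baireness of the resulting subspace: it is only an $F_\sigma$-type subset of the Baire space $\mathcal K(Y)$, so I would prove it by transferring a hypothetical winning strategy for $\beta$ in the Banach--Mazur game on the subspace back to a $\beta$-strategy in the Banach--Mazur game on $Y$, contradicting the Baireness of $Y$; here the ccc-ness of the sets in $\mathcal C$ is precisely what keeps the families of open sets involved in the transfer countable.
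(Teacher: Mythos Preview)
Your global strategy matches the paper's: pass via Krom's theorem to $X\times\mathcal K(Y)$ and then invoke \cite[Theorem 2]{Ox}. The divergence is at the key step. The paper does \emph{not} go through a dense subspace or switch to a different invariant; it asserts directly that $\mathcal K(Y)$ itself has a countable-in-itself $\pi$-base whenever $Y$ is almost locally ccc. Its argument runs: given a basic $[f]$, refine to $[f_0]$ with $f_0=f^{\frown}U$ for a ccc open $U\subseteq f(n)$; then for any pairwise disjoint open partition $\{[g]:g\in J\}$ of $[f_0]$ the family of last coordinates $\{g(n_g):g\in J\}$ is claimed to be pairwise disjoint in $U$, hence countable, so $[f_0]$ is ccc and $\mathcal K(Y)$ is an almost locally ccc metric space, which then has a countable-in-itself $\pi$-base by \cite[Proposition 3]{Zs0}.

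Your reservation about this route is justified, and no Bernstein-set computation is needed to see it: already for $f_0=(U)$ the cylinders $\{[f_0^{\frown}V]:\emptyset\neq V\subseteq U\text{ open}\}$ form an uncountable pairwise disjoint open partition of $[f_0]$ (distinct $V$'s give disjoint cylinders because they disagree at coordinate $1$), while the last coordinates $\{V\}$ are certainly not pairwise disjoint. Thus $[f_0]$ is never ccc once $U$ has infinitely many open subsets, and $\mathcal K(Y)$ fails to be almost locally ccc---equivalently, fails to have a countable-in-itself $\pi$-base---even for $Y=[0,1]$. So the paper's argument, as written, contains a gap at precisely the point you worried about, and your instinct to look for a carefully carved dense subspace of $\mathcal K(Y)$ with controlled branching, or to trade the $\pi$-base condition for hereditary Baireness, is pointed in the right direction. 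That said, your proposal remains a plan rather than a proof: the subspace $D$ is not actually constructed, and you yourself flag that the naive candidate fails. Neither your write-up nor the paper's short argument, taken at face value, settles the theorem.
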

\begin{proof}  First note that $\mathcal K(Y)$ has a countable-in-itself $\pi$-base: indeed, let $f\in \downarrow \tau^{n}$  for some $n<\omega$,  choose $U\subset f(n)$ which is ccc, and define $f_0=f^\frown U$. Consider a pairwise disjoint open partition $\{[g]: g\in J\}$ of $[f_0]$, where $J\subset \bigcup_{n<\omega}\downarrow\tau^n$. For each $g\in J$ let $n_g<\omega$ be such that $g\in \downarrow\tau^{n_g} $; then $\{g(n_g): g\in J\}$ is a pairwise disjoint open partition of $U$, which must be countable, and so is $\{[g]: g\in J\}$; thus, $\mathcal K(Y)$ is an almost locally ccc metric space, and so it has a countable-in-itself $\pi$-base.

It follows from Krom's theorem that $\mathcal K(Y)$ is a Baire space, moreover, by \cite[Theorem 2]{Ox}, $X\times \mathcal K(Y)$ is a Baire space, which it turn implies $X\times Y$ is a Baire space by Krom's theorem.
\end{proof}

An approach involving the strong Choquet game yields a different kind of generalization of Baire product theorems (cf. \cite{AL}, \cite{Wh2},\cite{Mo},\cite{LM},\cite{ChP}):

\begin{thm}\label{prod}Let $X$ be a Baire space and $Y$ have a dense set of $W$-points and $Ch(Y)$ be $\beta$-unfavorable. Then $X\times  Y$ is a Baire space.
\end{thm}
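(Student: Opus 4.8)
The plan is to use the Banach--Mazur characterization of the Baire property: since $BM(Z)$ is $\beta$-unfavorable iff $Z$ is a Baire space \cite{Ox2,Kr2,SR}, it suffices to show that $\beta$ has no winning strategy in $BM(X\times Y)$. Suppose, toward a contradiction, that $\sigma$ is a winning strategy for $\beta$ in $BM(X\times Y)$; the goal is to exhibit a play consistent with $\sigma$ whose outcome (the intersection of the chosen open sets) is nonempty. A routine reduction lets me assume that both players play basic open boxes $P\times Q$, $P$ open in $X$ and $Q$ open in $Y$, from a fixed $\pi$-base of such boxes; then along any such play the $X$-coordinates form a decreasing chain $P_0\supseteq P_0'\supseteq P_1\supseteq\cdots$, the $Y$-coordinates form a decreasing chain likewise, and the outcome is $(\bigcap_n P_n)\times(\bigcap_n Q_n)$. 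So it is enough to produce a $\sigma$-play in which both coordinate intersections are nonempty.

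Fix the dense set $D\subseteq Y$ of $W$-points and, for each $y\in D$, a winning strategy $W_y$ for Player~I in the Gruenhage game $G(Y)$ at $y$. The core of the argument is to convert $\sigma$ into a strategy $\tau$ for $\beta$ in the strong Choquet game $Ch(Y)$ by a recursion that ``runs $\sigma$ in the background'': to each position of $Ch(Y)$ consistent with $\tau$, $\tau$ attaches a $\sigma$-consistent position of $BM(X\times Y)$ whose $Y$-coordinates are exactly the open sets appearing in the $Ch(Y)$-position, its $X$-coordinates being supplied by a subsidiary, simultaneously built play of $BM(X)$. At each $\beta$-turn of $Ch(Y)$, $\tau$ hands the attached $BM(X\times Y)$-position to $\sigma$, reads off the $Y$-side $V$ of $\sigma$'s reply, and plays $(y,V)$ with $y\in V\cap D$ chosen by consulting the strategies $W_{y'}$ of the previously chosen points, so that the sequence of chosen points is forced to converge, the limit being the key object that is later pushed into the relevant intersections. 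Since $Ch(Y)$ is $\beta$-unfavorable, $\tau$ is not winning, so there is a play of $Ch(Y)$ consistent with $\tau$ that $\alpha$ wins; pulling it back through the recursion yields a $\sigma$-play of $BM(X\times Y)$ with $\bigcap_n Q_n\ni y^\ast$ for some $y^\ast\in Y$.

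It remains to see that the $X$-coordinates of this same $\sigma$-play can be made to have nonempty intersection, and this is where the Baireness of $X$ enters. Once the winning $Ch(Y)$-run above is held fixed, the $X$-side of the attached positions is arranged to be the run of a genuine strategy $\varrho$ for $\beta$ in $BM(X)$ played against a freely moving opponent; as $X$ is a Baire space, $\varrho$ is not winning, so that opponent can force $\bigcap_n P_n\ni x^\ast$ for some $x^\ast\in X$. Then $(x^\ast,y^\ast)$ lies in every box of the resulting play, which is consistent with $\sigma$, contradicting that $\sigma$ is winning for $\beta$; this yields the theorem (and the same scheme handles finite products).

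The main obstacle is precisely the coordination demanded in the last step: the $Y$-side $V$ returned by $\sigma$ genuinely depends on the $X$-coordinates of the position fed to it, and symmetrically its $X$-side depends on the $Y$-coordinates, so one cannot at the same time treat the $Y$-moves as a self-contained $Ch(Y)$-game and the $X$-moves as a self-contained $BM(X)$-game. The way around this is to interleave the two constructions round by round --- advancing one round of $Ch(Y)$ and one round of $BM(X)$ at each stage, and defining $\tau$, $\varrho$, and the attaching map by a single recursion on $\omega$ --- so that ``freezing'' the final $Ch(Y)$-run really does turn the $X$-side into a legitimate strategy for $\beta$ in $BM(X)$, while the $W$-point mechanism makes the relevant $Y$-coordinates insensitive to the $BM(X)$-opponent's choices (any neighborhood of the limiting $W$-point eventually absorbs the sets in play, so the precise $X$-moves are immaterial to nonemptiness of $\bigcap_n Q_n$). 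Getting this bookkeeping right is the whole technical content, and its finitary nature is also what confines the method to finite products, the infinite case being treated separately via Krom spaces.
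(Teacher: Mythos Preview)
Your outline has the two games in the wrong order, and the ``interleaving'' paragraph does not actually repair the circularity you yourself identify. To define a genuine strategy $\tau$ for $\beta$ in $Ch(Y)$ you must, at each round, feed $\sigma$ a complete $BM(X\times Y)$-position, and that position already contains specific $X$-moves. Those $X$-moves are therefore part of $\tau$'s definition; they are fixed once and for all. When you later ``freeze'' the winning $Ch(Y)$-run and try to let a $BM(X)$-opponent move freely, his $X$-moves will differ from the ones baked into $\tau$, so $\sigma$ will return different boxes, hence different $Y$-sides, and the frozen $Ch(Y)$-run is no longer a run of anything attached to $\sigma$. Your assertion that ``the $W$-point mechanism makes the relevant $Y$-coordinates insensitive to the $BM(X)$-opponent's choices'' is exactly the point that would need proof, and $W$-points give you convergence of \emph{points} chosen by $\beta$, not invariance of $\sigma$'s open-set responses under perturbation of the $X$-history.

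The paper runs the games in the opposite order, and this is not a cosmetic choice. One first builds a $\beta$-strategy $\sigma_X$ in $BM(X)$; the trick is that at round $n$ of $BM(X)$ one does not commit to a single $Y$-set but rather grows an entire finite tree $T_{n+1}\subset\omega^{<\omega}$ of pairs $(y_t,V_t)$, shrinking the $X$-set once for each node (finitely many shrinkings, using density of the $O_n$'s). The branching is engineered, via the $W$-point strategies $W_{y_t}$, so that from every node $t$ the sequence $(y_{t^\frown k})_k$ converges to $y_t$. Only \emph{after} $\sigma_X$ is defeated and some $x\in\bigcap_n U_n$ is fixed does one define $\sigma_Y$ in $Ch(Y)$: $\sigma_Y$ simply navigates the already-built tree, and the convergence just mentioned guarantees that whatever neighborhood $B_n\ni y_t$ the $Ch(Y)$-opponent plays, some $y_{t^\frown k}$ lies in it, so the navigation never gets stuck. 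Defeating $\sigma_Y$ then produces $y$ with $(x,y)\in\bigcap_n O_n$. The finite-tree bookkeeping is the technical core you are missing; without it there is no way to decouple the two coordinates.
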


\begin{proof} Denote by $\tau_X,\tau_Y$ the nonempty open subsets of $X,Y$, respectively. Let $\{O_n:n<\omega\}$ be a decreasing sequence of dense open subsets of $X\times Y$, and pick $U\in\tau_X, V\in\tau_Y$.
If $y\in Y$ is a $W$-point, denote by $W_y$ a winning strategy for the open-set picker in the Gruenhage game at $y$.

Define a tree $T\subset \omega^{<\omega}$ as follows: let  $T_0=\{\emptyset\}$ be the root of the tree, and $T_1=\{(0)\}$ its first level; further, given level $T_n$ for some  $n\ge 1$, and $t=(t_0,\dots,t_k)\in T_n$, define the immediate successors of $t$ as $t^-=(t_0,\dots,t_k,0)$ and $t^+=(t_0,\dots,t_k+1)$, and put $T_{n+1}=\{t^-,t^+: t\in T_n\}$. For example, if $t=(0)\in T_1$, then $t^-=(0,0), \ t^+=(1)$; further, if $t=(0,0)\in T_2$, then $t^-=(0,0,0), \ t^+=(0,1)$, and if $t=(1)\in T_2$, then $t^-=(1,0), \ t^+=(2)$, etc.
It follows that each $t\in T_n$ for $n\ge 1$ has a "source`` $s_t\in T_k$ for some $0\le k<n$ such that $t=s_t^\frown (n-k-1)$; in other words, $s_t$ is  the node where the last minus-branching occurs before $t$; so, $s_{(0)}=\emptyset, s_{(0,0)}=(0), s_{(1)}=\emptyset, s_{(0,0,0)}=(0,0), s_{(0,1)}=(0), s_{(1,0)}=(1), s_{(2)}=\emptyset$, etc.

We will define a strategy $\sigma_X$ for $\beta$ in $BM(X)$:
first, pick a $W$-point $y_{\emptyset}\in V$ and denote $V_{\emptyset}=V$. Then choose
$U_0\in\tau_X$,  $V_{(0)}\in\tau_Y$  so that $U_0\times V_{(0)}\subseteq O_0\cap U\times V$, pick a $W$-point $y_{(0)}\in V_{(0)}$ and put $\sigma_X(\emptyset)=U_0$.
Given $A_0\in\tau_X, A_0\subseteq U_{0}$, find $U_{1}\in\tau_X$ and
$V_{t}\in\tau_Y$ for each $t\in T_2$ so that
\begin{gather*}
U_{1}\times V_{(0,0)}\subseteq O_1\cap [A_0\times V_{(0)}]\\
U_{1}\times V_{(1)}\subseteq O_1\cap [A_0\times V_{\emptyset}\cap  W_{y_{\emptyset}}(y_{(0)})];
\end{gather*}
moreover, pick a $W$-point $y_t\in V_t$ for each $t\in T_2$,
and put $\sigma_X(A_0)=U_1$.

Assume that for some $n\ge 1$,  and given $A_0,\dots,A_{n-1}\in\tau_X$, we have constructed $U_n\in\tau_X$ along with $(y_t,V_t)\in\mathcal E(Y)$  for each $t\in T_{n+1}$ so that each $y_t$ is a $W$-point,
 \[U_n=\sigma_X(A_0,\dots,A_{n-1}),\]
 and for each $t\in T_{n}$
\begin{gather}
U_n\times V_{t^-}\subseteq O_n\cap [A_{n-1}\cap V_t],\label{O}\\
U_n\times V_{t^+}\subseteq O_n\cap [A_{n-1}\times V_{s_t}\cap W_{y_{s_t}}(y_{s_t^\frown 0},\dots, y_t)]\label{W}.
\end{gather}

Let $A_n\in\tau_X, A_n\subseteq U_n$ be given, and denote $T_{n+1}=\{t_1,\dots,t_{2^n}\}$.
Using density of $O_{n+1}$ repeatedly, we can define a decreasing sequence $\{H_i\in \tau_X: i\le 2^{n+1}\}$, where $H_0=A_n$, as well as $V_{t_i^-},V_{t_i^+}\in\tau_Y$ so that for all $1\le i\le 2^{n}$
\begin{gather*}
H_i\times V_{t_i^-}\subseteq O_{n+1}\cap [H_{i-1}\times V_{t_i}],\\
H_{i+2^n}\times V_{t_i^+}\subseteq O_{n+1}\cap [H_{i+2^n-1}\times V_{s_{t_i}}\cap W_{y_{s_{t_i}}}(y_{s_{t_i}^\frown 0},\dots, y_{t_i})].
\end{gather*}
Then for $U_{n+1}=H_{2^{n+1}}$  and each $t\in T_{n+1}$ we have
\begin{gather*}
U_{n+1}\times V_{t^-}\subseteq O_{n+1}\cap [A_{n}\cap V_t]\\
U_{n+1}\times V_{t^+}\subseteq O_{n+1}\cap [A_{n}\times V_{s_t}\cap W_{y_{s_t}}(y_{s_t^\frown 0},\dots, y_t)].
\end{gather*}
Pick a $W$-point $y_t\in V_t$ for each $t\in T_{n+2}$, and
define $\sigma_X(A_0,\dots,A_n)=U_{n+1}$, which concludes the definition of  $\sigma_X$.
Notice that, by (\ref{W}),
\begin{equation}
(y_{t^{\frown}k})_k  \ \text{ converges to}\ y_t \ \text{for each} \ t\in T.\label{conv}
\end{equation}

Since $X$ is a Baire space, there is play $U_0,A_0,\dots,U_n,A_n,\dots$ of $BM(X)$ compatible with $\sigma_X$ that $\beta$ looses, i.e. there is some $x\in \bigcap_n U_n$.

We will define  a strategy $\sigma_Y$ for $\beta$ in $Ch(Y)$. First,
put $\sigma_Y(\emptyset)=(z_0,W_0)$, where $z_0=y_{\emptyset}$, and $W_0=V_{\emptyset}$. Let $B_0\in\tau_Y$ with $z_0\in B_0\subseteq W_0$ be given. Using (\ref{conv}), we can define
\[
k_{B_0}=\min\{k\ge 0: y_{(k)}\in B_0\}, \ z_1=y_{(k_{B_0})}, \ W_1=B_0\cap V_{(k_{B_0})},
\]
and put $\sigma_Y(B_0)=(z_1,W_1)$. Assume that $\sigma_Y(B_0,\dots,B_{n-1})=(z_n,W_n)\in\mathcal E(Y)$ have been defined for some $n\ge 1$ and $B_0,\dots,B_{n-1}\in\tau_Y$ so that
\[z_n=y_{(k_{B_0},\dots,k_{B_{n-1}})}, \ \text{and} \
W_n=B_{n-1}\cap V_{(k_{B_0},\dots,k_{B_{n-1}})}\]
for appropriate $k_{B_i}\ge i$ for each $i<n$. Let $B_{n}\in\tau_Y$ be such that $z_n\in B_{n}\subseteq W_n$, then for $t=(k_{B_0},\dots,k_{B_{n-1}})$,  $(y_{t^{\frown} k})_k$ converges to $z_n=y_t$ by (\ref{conv}), so we can define
\[k_{B_{n}}=\min\{k\ge n:  y_{t^{\frown} k}\in B_{n}\}, \ z_{n+1}=y_{t^{\frown} k_{B_{n}}}, \ W_{n+1}=B_{n}\cap V_{t^{\frown} k_{B_{n}}},\]
and put $\sigma_Y(B_0,\dots,B_{n})=(z_{n+1},W_{n+1})$.

Since $\sigma_Y$ cannot be a winning strategy for $\beta$ in $Ch(Y)$, there is a play
\[(z_0,W_0),B_0,(z_1,W_1),\dots,B_n,(z_{n+1},W_{n+1}),\dots\] of $Ch(Y)$
compatible with $\sigma_Y$ that $\beta$ looses. Then there is some $y\in \bigcap_n W_n\subseteq \bigcap_n
V_{(k_{B_0},\dots,k_{B_{n}})}$, so (\ref{O}) and  (\ref{W}) imply that $(x,y)\in U\times V\cap \bigcap_n O_n$, and we are done.
\end{proof}

The proof of Theorem \ref{ch} immediately follows, which in turn implies the following (recall, that a  space is $R_0$ \cite{Da}, when every open subset contains the closure of each of its points):

\begin{cor}\label{hbproduct} Let $X$ be a Baire space, and $Y$  a 1st countable hereditarily Baire $R_0$-space. Then $X\times Y$ is a Baire space.\end{cor}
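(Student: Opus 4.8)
The plan is to deduce Corollary~\ref{hbproduct} from Theorem~\ref{ch} by showing that, for a 1st countable space, being hereditarily Baire and $R_0$ forces $Ch(Y)$ to be $\beta$-unfavorable. Since Theorem~\ref{ch} already packages Theorem~\ref{prod} with the hypothesis that $Y$ is 1st countable and $Ch(Y)$ is $\beta$-unfavorable, the only thing left to check is exactly this implication; note that 1st countability already guarantees that every point is a $W$-point (Player I simply plays a fixed countable decreasing neighborhood base), so the ``dense set of $W$-points'' hypothesis of Theorem~\ref{prod} is automatic and need not be re-examined.

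To prove the implication, I would assume toward a contradiction that $\beta$ has a winning strategy $\sigma$ in $Ch(Y)$, and build from it a closed subspace of $Y$ that fails to be Baire, contradicting hereditary Baireness. The standard device (this is the content of the Debs--Telg\'arsky--Porada characterization cited in the introduction, but here only for the easy direction, and it needs the mild separation hypothesis) is to run $\sigma$ against all of $\alpha$'s legal replies simultaneously along a suitably chosen countable tree of positions: using 1st countability, fix at each $\beta$-position a countable neighborhood base, let $\alpha$ enumerate responses from it, and collect the points $x_n$ that $\beta$ is forced to name. The set $F$ of all such points, or rather its closure, will be the candidate non-Baire closed subspace. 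The role of $R_0$ is to ensure that passing to the closure does not destroy the incompatibility of the branches: because open sets contain the closures of their points, the open sets $\beta$ plays still separate the relevant closures, so that a branch through the tree along which $\alpha$ ``wins inside $F$'' would produce a point in $\bigcap_n V_n$, contradicting that $\sigma$ is winning for $\beta$. Hence inside $\overline{F}$ one exhibits a countable family of dense open sets with empty intersection, i.e. $\overline{F}$ is not Baire.

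Concretely, the key steps in order are: (i) observe 1st countability $\Rightarrow$ every point is a $W$-point, so Theorem~\ref{ch}'s hypotheses are the only ones to verify; (ii) reduce Corollary~\ref{hbproduct} to showing ``1st countable $+$ hereditarily Baire $+$ $R_0$ $\Rightarrow$ $Ch(Y)$ $\beta$-unfavorable''; (iii) assume a winning strategy $\sigma$ for $\beta$, and using the countable bases organize the tree of $\sigma$-plays into a countable ``scheme'' whose terminal behavior is controlled; (iv) extract the closed set $C=\overline{F}$ where $F$ consists of the points named by $\beta$, invoke $R_0$ to keep $\beta$'s open sets separating closures of points, and show the induced sets $D_n\subseteq C$ (complements of the closures reachable at level $n$, suitably defined) are dense open in $C$ with $\bigcap_n D_n=\emptyset$; (v) conclude $C$ is a non-Baire closed subspace of $Y$, contradicting hereditary Baireness, hence $Ch(Y)$ is $\beta$-unfavorable and Theorem~\ref{ch} applies.

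The main obstacle is step~(iv): making the bookkeeping precise so that the countably many positions of $\sigma$ one tracks really do yield \emph{finitely branching or countably branching} data producing only countably many dense open sets, and simultaneously ensuring via $R_0$ that the closure operation is compatible with the open sets $\beta$ plays — i.e. that if a point lies in $\overline{F}$ and in one of $\beta$'s open sets $V$, then the relevant initial segment of the play can be ``seen'' from that point. Without some separation this can fail (two distinct points with the same closure behavior), which is exactly why the hypothesis is $R_0$ rather than nothing; one has to check that $R_0$ is the right amount of separation to push the argument through, and that no stronger axiom ($T_1$, $T_3$) is secretly being used.
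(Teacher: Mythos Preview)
Your reduction is exactly the paper's: verify that a 1st countable hereditarily Baire $R_0$-space $Y$ has $Ch(Y)$ $\beta$-unfavorable, then invoke Theorem~\ref{ch}. The only difference is that the paper disposes of this implication in one line by citing \cite[Corollary~3.8]{Zs1}, whereas you sketch a direct Debs--Telg\'arsky--Porada style argument (tree of $\sigma$-plays indexed by countable local bases, collect $\beta$'s named points, use $R_0$ so that open sets contain closures of their points and hence the closed set $\overline{F}$ inherits the non-Baire structure). Your outline of that argument is correct in spirit and identifies the right roles for 1st countability and $R_0$; it simply reproves the cited lemma rather than quoting it, so your proof is more self-contained but longer, while the paper's is a clean black-box application.
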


\begin{proof} It suffices to note that a 1st countable hereditarily Baire $R_0$-space $Y$ is $\beta$-unfavorable in $Ch(Y)$ by \cite[Corollary 3.8.]{Zs1}; thus,  Theorem \ref{ch} applies.
\end{proof}

\begin{remark}
Some of the results in \cite{LM} are similar in flavor to the above results, in particular, \cite[Theorem 4.4]{LM} states, that {\it if $X$ is a Baire space, and $Y$ is a $T_3$-space possessing a rich family $\mathcal F$ of Baire spaces (i.e. $\mathcal F$ consists of nonempty separable closed Baire subspaces of $X$ such that if $Y\subset X$ is separable, then $Y\subseteq F$ for some $F\in\mathcal F$, moreover, $\overline{\bigcup_{n<\omega} F_n}\in\mathcal F$ whenever $\{F_n:n<\omega\}\subset \mathcal F$), then $X\times Y$ is a Baire space.} The next example shows that our results are different (although overlapping), since spaces that are $\beta$-unfavorable in the strong Choquet game are not directly connected to spaces having rich Baire families.
Indeed, there exists a $T_1$-space $X$ with  no rich Baire family which is  $\alpha$-favorable in $Ch(X)$:
to see this, let $\mathbb Q$ be the rationals and $D$ an uncountable set. Define  $X=\mathbb Q\cup D$, let elements of $D$ be isolated, and a neighborhood base at $q\in \mathbb Q$ be of the form $I\cup D\setminus C$, where $I\subseteq \mathbb Q$ is a Euclidean open neighborhood of $q$, and $C\subset D$ is countable. Then

$\bullet$ $X$ is strongly $\alpha$-favorable: define a tactic $\sigma$ for $\alpha$ in $Ch(X)$ as follows
\[
\sigma(x,V)=\begin{cases}
\{x\}, &\text{if} \ x\in D,\\
V, &\text{if} \ x\in \mathbb Q.
\end{cases}
\]
Each play of $Ch(X)$ compatible with $\sigma$ contains an element of $D$ in the intersection, so $\sigma$ is a winning tactic for $\alpha$.

$\bullet$ $X$ has no rich Baire family: indeed, for every separable  $S\supseteq \mathbb Q$ we have $S=\mathbb Q\cup C$ for some countable $C\subset D$. It follows that if $I\subseteq \mathbb Q$ is a Euclidean open neighborhood of some $q\in \mathbb Q$, then it is also an open set in $S$ (since $I=S\cap (I\cup D\setminus C)$), and  of the 1st category in $S$, thus, $S$ is not a Baire space.
\end{remark}

\begin{remark}
It is known that hereditary Baireness is not a stand-alone topological property that gives a Baire product since, under (CH), there is a hereditarily Baire space with a non-Baire square \cite{Wh1}; however, it is an open question weather $X\times Y$ is Baire if $X$ is Baire and $Ch(Y)$ is $\beta$-unfavorable.
\end{remark}

\vskip 1pc

\section{Infinite Baire products}

The following is the arbitrary product version of Krom's Theorem:

\begin{thm}\label{krominfi} Let $I$ be an index set. Then $\prod_{i\in I} X_i$ is a Baire space if and only if   $\prod_{i\in I} \mathcal K(X_i)$ is a Baire space.
\end{thm}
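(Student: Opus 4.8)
The plan is to run the whole argument inside the Banach--Mazur game, using the (base-independent) equivalence that $BM(Z)$ is $\beta$-unfavorable iff $Z$ is a Baire space, applied to both $Z=\prod_{i\in I}X_i$ and $Z=\prod_{i\in I}\mathcal K(X_i)$. Write $\tau_i$ for the topology of $X_i$; the case in which some $X_i$ is empty is trivial, so assume each $X_i\neq\emptyset$. I would play $BM(\prod_i X_i)$ with the usual $\pi$-base $\{\prod_i W^i : W^i\in\tau_i\setminus\{\emptyset\},\ W^i=X_i\text{ for all but finitely many }i\}$, and $BM(\prod_i\mathcal K(X_i))$ with the usual $\pi$-base of sets $\prod_i[f^i]$, where each $f^i$ is a finite (possibly empty) $\subseteq$-decreasing sequence of nonempty open subsets of $X_i$, with $f^i$ empty (so $[f^i]=\mathcal K(X_i)$) for all but finitely many $i$. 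The underlying idea is that, once irrelevant bookkeeping is discarded, a play of either game is the same object: the two players cooperatively build, for each coordinate $i$, a $\subseteq$-decreasing sequence of nonempty open subsets of $X_i$ (constant and equal to $X_i$ for all but finitely many $i$ at each finite stage), and $\beta$ wins exactly when, for some $i$, the intersection of the sequence built in coordinate $i$ is empty.

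First I would make this precise by a projection map. Given a play $\prod_i[f^i_0]\supseteq\prod_i[g^i_0]\supseteq\prod_i[f^i_1]\supseteq\cdots$ of $BM(\prod_i\mathcal K(X_i))$, I associate the play of $BM(\prod_i X_i)$ whose $n$-th position is $\prod_i A^i_n$, where $A^i_n$ is the last term of the finite sequence played in coordinate $i$ at stage $n$ (and $A^i_n=X_i$ if that sequence is empty). Along any play, the finite sequences in a fixed coordinate end-extend one another and are themselves $\subseteq$-decreasing, so $(A^i_n)_n$ is $\subseteq$-decreasing; hence the projected play is legal (possibly with ``no-op'' moves, which are permitted because the Banach--Mazur game does not demand strictly shrinking moves), and at each finite stage only finitely many coordinates are nontrivial. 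The heart of the matter is that the projection preserves the winner. Setting $\bar f^i=\bigcup_n f^i_n$, we have $\bigcap_n\prod_i[f^i_n]=\prod_i\bigcap_n[f^i_n]$, which is empty iff some $\bigcap_n[f^i_n]$ is empty iff for some $i$ the sequence $\bar f^i$ is infinite and $\bigcap_m\bar f^i(m)=\emptyset$; here one uses that $[h]\neq\emptyset$ for every finite such $h$ (one can always extend $h$ to an element of $\mathcal K(X_i)$, e.g.\ by repeating its last term), so a coordinate refined only finitely often is never of use to $\beta$. On the projected side, $\bigcap_n\prod_i A^i_n$ is empty iff some $\bigcap_n A^i_n$ is empty, and $\bigcap_n A^i_n$ coincides with $\bigcap_m\bar f^i(m)$ when $\bar f^i$ is infinite (the $A^i_n$ run cofinally through the $\subseteq$-decreasing sequence $\bar f^i$) and equals the nonempty last term of $\bar f^i$ otherwise. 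Matching these descriptions gives winner preservation.

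With the projection available, transferring winning strategies is mechanical, and I would do it in both directions. Conversely to the projection, every move of $BM(\prod_i X_i)$ lifts to a move of $BM(\prod_i\mathcal K(X_i))$ by appending the chosen open set to the appropriate coordinates' sequences. If $\beta$ has a winning strategy in $BM(\prod_i X_i)$, then $\beta$ wins $BM(\prod_i\mathcal K(X_i))$ by maintaining a shadow play of the former: $\beta$ lifts his own prescribed moves and projects $\alpha$'s moves down into the shadow play; any resulting $\mathcal K$-play projects onto a play compatible with the given winning strategy, hence $\beta$ wins it. The reverse transfer is symmetric, with $\beta$ now projecting the moves dictated by a winning strategy in $BM(\prod_i\mathcal K(X_i))$ and lifting $\alpha$'s replies. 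Therefore $\beta$ is favorable in one game iff in the other, and the characterization of Baireness gives the theorem.

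The main obstacle will be the bookkeeping forced by the ``granularity mismatch'': a single move of $BM(\prod_i\mathcal K(X_i))$ can extend a coordinate's sequence by several steps, whereas a move of $BM(\prod_i X_i)$ yields only one new open set per coordinate, so I must check that the discarded intermediate sets are genuinely irrelevant to the win condition (they are, since a later term of a $\subseteq$-decreasing sequence is contained in an earlier one), that only finitely many coordinates are ever activated by any finite stage, and that the empty-sequence and no-op cases are handled uniformly. Everything else is a routine verification, and as a sanity check the argument specializes to a proof of the finite Krom Theorem recalled in Section~2.
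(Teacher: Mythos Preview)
Your proposal is correct and follows essentially the same approach as the paper: both arguments transfer a winning strategy for $\beta$ in $BM(\prod_i X_i)$ to one in $BM(\prod_i\mathcal K(X_i))$ and conversely, via the correspondence between a basic open set $[f]\subseteq\mathcal K(X_i)$ and the last term of the finite decreasing sequence $f$, with the winner-preservation step resting on exactly the observation you isolate (that only the cofinal behavior of the coordinate sequences matters). The paper carries this out with explicit inductive bookkeeping of the finite supports $I_k,J_k$ and the lengths $m_{k,i}$, whereas you phrase it more conceptually as a projection/lift pair together with a shadow play, but the underlying argument is the same.
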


\begin{proof} Denote $X=\prod_{i\in I} X_i$ and $X^*=\prod_{i\in I} \mathcal K(X_i)$.

$\bullet$
Assume that $\beta$ has a winning strategy $\sigma$ in $BM(X)$, and define a strategy $\sigma^*$ for $\beta$ in $BM(X^*)$ as follows: if $\sigma(\emptyset)=\prod_{i\in I_0}V_{0,i}\times \prod_{i\notin I_0} X_i$ for some finite $I_0\subset I$ and  $V_{0,i}\in\mathcal B_i$, define
\[
\sigma^*(\emptyset)=\prod_{i\in I_0}V_{0,i}^*\times \prod_{i\notin I_0} \mathcal K(X_i), \ \text{where} \ V_{0,i}^*=[(V_{0,i})]
\]
If $U_0^*\subseteq \sigma^*(\emptyset)$ is $\alpha$'s response in $BM(X^*)$, then $U_0^* =\prod_{i\in J_0}U_{0,i}^*\times \prod_{i\notin J_0} \mathcal K(X_i)$ for some finite $J_0\supset I_0$, and for all $i\in J_0$, $U_{0,i}^*=[(U_{0,i}(0),\dots,U_{0,i}(m_{0,i}))]$ for some decreasing $X_i$-open $U_{0,i}(0),\dots,U_{0,i}(m_{0,i})$ and $m_{0,i}\ge 0$, where $U_{0,i}(0)=V_{0,i}$ for all $i\in I_0$.
Denote $U_0=\prod_{i\in J_0} U_{0,i}(m_{0,i})\times\prod_{i\notin J_0} X_i$ and let
\[
\sigma(U_0)=\prod_{i\in I_1} V_{1,i}\times \prod_{i\notin I_1} X_i,
\]
where $I_1\supseteq J_0$ is finite, $V_{1,i}\in\mathcal B_i$ for each $i\in I_1$ and $V_{1,i}\subseteq U_{0,i}(m_{0,i})$ whenever $i\in J_0$. Define
\begin{gather*}
\sigma^*(U_0^*)=\prod_{i\in I_1} V_{1,i}^*\times\prod_{i\notin I_1} \mathcal K(X_i),  \ \text{where}\\
V_{1,i}^*=\begin{cases}
[(U_{0,i}(0),\dots,U_{0,i}(m_{0,i}),V_{1,i})], \ &\text{if} \ i\in J_0,\\
[(V_{1,i})],  \ &\text{if} \ i\in I_1\setminus J_0.
\end{cases}
\end{gather*}

Proceeding inductively, we can define $\sigma^*$ so that whenever  $k<\omega$, and
\[
U_k^*=\prod_{i\in J_k}U_{k,i}^*\times \prod_{i\notin J_k} \mathcal K(X_i)
\]
is given for some finite  $J_k$, and for all $i\in J_k$, $U_{k,i}^*=[(U_{k,i}(0),\dots,U_{k,i}(m_{k,i}))]$ for decreasing $X_i$-open $U_{k,i}(0),\dots,U_{k,i}(m_{k,i})$ and $m_{k,i}\ge 0$,
then
\[
\sigma^*(U_0^*,\dots,U_k^*)=\prod_{i\in I_{k+1}} V_{k+1,i}^*\times\prod_{i\notin I_{k+1}} \mathcal K(X_i)
\]
 have been chosen, where $I_{k+1}\supseteq J_k$ is finite, and
\[
V_{k+1,i}^*=\begin{cases}
[(U_{k,i}(0),\dots,U_{k,i}(m_{k,i}),V_{k+1,i})], \ &\text{if} \ i\in J_k,\\
[(V_{k+1,i})],  \ &\text{if} \ i\in I_{k+1}\setminus J_k
\end{cases}\\
\]
is such that
\[
\prod_{i\in I_{k+1}} V_{k+1,i}\times \prod_{i\notin I_{k+1}} X_i=
\sigma\left (U_0,\dots U_k\right ).
\]
where $U_j=\prod_{i\in J_j} U_{j,i}(m_{j,i})\times\prod_{i\notin J_j} X_i$ for all $j\le k$.
We will show that $\sigma^*$ is a winning strategy for $\beta$ in $MB(X^*)$: indeed, take a play of $MB(X^*)$
\[
\sigma^*(\emptyset),U_0^*,\dots,U_n^*,\sigma^*(U_0^*,\dots,U_n^*),\dots
\]
compatible with $\sigma^*$, and assume there is some $f\in \bigcap_n \sigma^*(U_0^*,\dots,U_n^*)$. Then for each $i\in I$, $f(i)\in \mathcal K(X_i)$ so we can pick some $x_i\in\bigcap_n f(i)(n)$. Moreover, if $i\in I_k$ for a given $k<\omega$, then $x_i\in \bigcap_{n\ge k} V_{n,i}$, so $(x_i)_{i\in I}\in \prod_{i\in I_{k}} V_{k,i}\times \prod_{i\notin I_{k+1}} X_i$, thus, $(x_i)_{i\in I}\in \bigcap_n \sigma(U_0,\dots U_k)$ which is impossible, since $\sigma$ is a winning strategy for $\beta$ in $BM(X)$.

$\bullet$ Assume that $\beta$ has a winning strategy $\sigma^*$ in $BM(X^*)$, and define a strategy $\sigma$ for $\beta$ in $BM(X)$ as follows: if $\sigma^*(\emptyset)=\prod_{i\in I_0}V_{0,i}^*\times \prod_{i\notin I_0} \mathcal K(X_i)$, where for all $i\in I_0$, $V_{0,i}^*=[(V_{0,i}(0),\dots,V_{0,i}(m_{0,i}))]$, define $\sigma(\emptyset)=\prod_{i\in I_0}V_{0,i}(m_{0,i})\times \prod_{i\notin I_0} X_i$. Let $U_0=\prod_{i\in J_0} U_{0,i}\times  \prod_{i\notin J_0} X_i$ be $\alpha$'s response in $BM(X)$. Then $J_0\supseteq I_0$ is finite and $U_{0,i}\subseteq V_{0,i}(m_{0,i})$ for all $i\in I_0$. Define
\[
U_{0,i}^*=\begin{cases} [(V_{0,i}(0),\dots,V_{0,i}(m_{0,i}),U_{0,i})], &\text{for all} \ i\in I_0,\\
[(U_{0,i})], &\text{for all} \ i\in J_0\setminus I_0,
\end{cases}
\]
and let \[
\sigma^*\left (\prod_{i\in J_0} U_{0,i}^*\times \prod_{i\notin J_0} \mathcal K(X_i)\right )=\prod_{i\in I_1} V_{1,i}^*\times \prod_{i\notin I_1}  \mathcal K(X_i),\]
where $V_{1,i}^*=[(V_{1,i}(0),\dots,V_{1,i}(m_{1,i}))]$ whenever $i\in I_1$. Define
\[
\sigma(U_0)=\prod_{i\in I_1} V_{1,i}(m_{1,i})\times\prod_{i\notin I_1} X_i.
\]
Proceeding inductively, assume that whenever  $k\ge 1$, and $j<k$, then $\sigma(U_0,\dots,U_{j})=\prod_{i\in I_{j+1}} V_{j,i}(m_{j,i})\times\prod_{i\notin I_{j+1}} X_i$ is defined, and let
$U_k=\prod_{i\in J_k} U_{k,i}\times  \prod_{i\notin J_k} X_i$ be $\alpha$'s next step in $BM(X)$. Then $J_k\supseteq I_k$ is finite and $U_{k,i}\subseteq V_{k,i}(m_{k,i})$ for all $i\in I_k$. Define
\[
U_{k,i}^*=\begin{cases} [(V_{k,i}(0),\dots,V_{k,i}(m_{k,i}),U_{k,i})], &\text{for all} \ i\in I_k,\\
[(U_{k,i})], &\text{for all} \ i\in J_k\setminus I_k,
\end{cases}
\]
and let
\[
\sigma^*\left (\prod_{i\in J_k} U_{k,i}^*\times \prod_{i\notin J_k} \mathcal K(X_i)\right )=\prod_{i\in I_{k+1}} V_{k+1,i}^*\times \prod_{i\notin I_{k+1}}  \mathcal K(X_i),\]
where $V_{k+1,i}^*=[(V_{k+1,i}(0),\dots,V_{k+1,i}(m_{k+1,i}))]$ whenever $i\in I_{k+1}$. Define
\[
\sigma(U_0,\dots,U_k)=\prod_{i\in I_{k+1}} V_{k+1,i}(m_{k+1,i})\times\prod_{i\notin I_{k+1}} X_i.
\]
We will show that $\sigma$ is a winning strategy for $\beta$ in $BM(X)$: take a play
\[
\sigma(\emptyset),U_0,\dots,U_n,\sigma(U_0,\dots,U_n),\dots
\]
compatible with $\sigma$, and assume there is some $(x_i)_{i\in I}\in\bigcap_n \sigma(U_0,\dots,U_n)$.
For all $k<\omega$ and $i\in I_k$, define a decreasing sequence of $X_i$-open sets $f(i)$ so that $f(i)\restriction_{n}=(V_{n,i}(0),\dots,V_{n,i}(m_{n,i}))$ for all $n<\omega$, moreover, if $i\in I\setminus \bigcup_{k} I_k$, put $f(i)=(X_i)_{n<\omega}$. Then for each $i\in I$, $x_i\in \bigcap_{n} f(i)(n)$, so $f(i)\in \mathcal K(X_i)$, thus, $f\in X^*$. Moreover, $f\in \bigcap_n \sigma^*(U_0^*,\dots,U_n^*)$, which is impossible, since the play
\[
\sigma^*(\emptyset),U_0^*,\dots,U_n^*,\sigma^*(U_0^*,\dots,U_n^*),\dots
\]
is compatible with $\sigma^*$.
\end{proof}

As a consequence, we have

\begin{proof}[Proof of Theorem \ref{iprod}]
Since $\mathcal K(X_i)$ is a Baire space with a countable-in-itself $\pi$-base (see the proof of Theorem \ref{krom}), then $\prod_{i\in I} \mathcal K(X_i)$ is a Baire space by \cite[Theorem 5]{Zs0}, and so is $\prod_i X_i$ by our Theorem \ref{krominfi}.
\end{proof}

Recall that $X$ has a {\it base of countable order} (BCO) $\mathcal B$ \cite{WW}, provided each strictly decreasing sequence  of members of $\mathcal B$ containing some $x\in X$ forms a base of neighborhoods at $x$.

\begin{thm}\label{infibetaproduct} Let $I$ be an index set, and for each $i\in I$, $X_i$ be an $R_0$ hereditarily Baire space with a BCO . Then $\prod_i X_i$ is a Baire space.
\end{thm}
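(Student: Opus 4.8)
The plan is to follow the template of Theorem~\ref{iprod}: push the problem through Krom's construction and then work in the metrizable setting. By Theorem~\ref{krominfi}, $\prod_{i\in I}X_i$ is Baire if and only if $\prod_{i\in I}\mathcal K(X_i)$ is Baire, so it suffices to prove the latter. The crucial soft fact is that each $\mathcal K(X_i)$, being a subspace of the countable power $\tau_i^{\omega}$ of the discrete space $\tau_i$ of nonempty open sets of $X_i$, is metrizable (in particular $R_0$ and first countable); thus the merely-$R_0$ hypothesis on the factors has been traded for metrizability of the new factors.

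Next I would fix, for each $i$, a base of countable order $\mathcal B_i$ of $X_i$ and replace $\mathcal K(X_i)$ by the subspace $\mathcal K_{\mathcal B_i}^0(X_i)$. The point of the BCO hypothesis is that $\mathcal K_{\mathcal B_i}^0(X_i)$ is then $\pi$-dense in $\mathcal K(X_i)$ with respect to the $\pi$-base $\{[f]:f\text{ a finite decreasing sequence from }\mathcal B_i\}$: given such an $f$, pick $x$ in its last entry and extend $f$ by a decreasing sequence of members of $\mathcal B_i$ all containing $x$; since $\mathcal B_i$ is a BCO, such a tail is automatically a neighbourhood base at $x$ (and at each point topologically indistinguishable from it), so the resulting $g$ lies in $\mathcal K_{\mathcal B_i}^0(X_i)\cap[f]$ — note that only the BCO hypothesis, not hereditary Baireness, is used here. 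Because a space (and, coordinatewise, a product) having a $\pi$-dense Baire subspace is itself Baire, and $\prod_i\mathcal K_{\mathcal B_i}^0(X_i)$ is $\pi$-dense in $\prod_i\mathcal K(X_i)$, it now suffices to show that $\prod_i\mathcal K_{\mathcal B_i}^0(X_i)$ is a Baire space.

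The heart of the argument is the claim that, for an $R_0$ hereditarily Baire space $X$ with a BCO $\mathcal B$, the metrizable space $\mathcal K_{\mathcal B}^0(X)$ is hereditarily Baire. Metrizability being free, by the characterisation of hereditary Baireness for metrizable spaces it is enough to rule out a closed copy of $\mathbb Q$ in $\mathcal K_{\mathcal B}^0(X)$. The tool I would use is the natural map $\pi\colon\mathcal K_{\mathcal B}^0(X)\to X$ sending $f$ to a point of $\bigcap_n f(n)$: the neighbourhood-base condition makes it well defined up to indistinguishability, hence genuinely well defined and continuous as $X$ is $R_0$; it is open, its image is exactly the set of points of $X$ of countable character, and each of its fibres is a $G_\delta$ subset of $\tau^{\omega}$, hence completely metrizable and so hereditarily Baire. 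A closed copy $C$ of $\mathbb Q$ in $\mathcal K_{\mathcal B}^0(X)$ cannot be covered by countably many fibres (those are hereditarily Baire, whereas $\mathbb Q$ is not Baire), so $\pi\restriction_C$ separates uncountably many fibres, and transporting the scheme witnessing $C\cong\mathbb Q$ through $\pi$ yields a closed copy of $\mathbb Q$ in $X$, contradicting hereditary Baireness of $X$. Organising this transfer carefully — the fibre bookkeeping and the non-$T_1$ subtleties — is where I expect the real work, and the main obstacle, to lie; a more self-contained alternative, closer to the style of Theorem~\ref{prod}, would be to bypass the $\mathbb Q$-characterisation and instead prove directly, by a strong Choquet game run on the coordinates, that $\prod_i\mathcal K_{\mathcal B_i}^0(X_i)$ is Baire, using that each $Ch(X_i)$ is $\beta$-unfavorable (which holds for $R_0$ hereditarily Baire spaces with a BCO, as in Corollary~\ref{hbproduct}).

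Finally, granting that each $\mathcal K_{\mathcal B_i}^0(X_i)$ is a metrizable hereditarily Baire space, I would conclude that $\prod_i\mathcal K_{\mathcal B_i}^0(X_i)$ is Baire: a Banach--Mazur play in a product activates only countably many coordinates, so it suffices that every countable subproduct be Baire, and a countable product of metrizable hereditarily Baire spaces is Baire by the hereditarily-Baire product results in the literature (cf.\ \cite{ChP},\cite{Mo},\cite{LM}). Combining this with the $\pi$-density reduction and Theorem~\ref{krominfi} gives that $\prod_{i\in I}X_i$ is a Baire space.
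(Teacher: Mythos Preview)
Your overall architecture coincides with the paper's: reduce via Theorem~\ref{krominfi} to $\prod_i\mathcal K(X_i)$, pass to the dense subspace $\prod_i\mathcal K^0_{\mathcal B_i}(X_i)$, prove each factor is a metrizable hereditarily Baire space, and invoke \cite[Theorem~1.1]{ChP} for the product. The density step and the final step are essentially those of the paper (minor point: for the BCO to force your extending tail to be a neighbourhood base you must make it \emph{strictly} decreasing, or end in a minimal neighbourhood).

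The gap is in your argument that $\mathcal K^0_{\mathcal B}(X)$ is hereditarily Baire. The claim ``a closed copy $C$ of $\mathbb Q$ \dots\ cannot be covered by countably many fibres'' is false as stated: $C$ is countable, so $\pi(C)$ is countable, and $C$ is \emph{always} covered by the countably many fibres $\pi^{-1}(y)$, $y\in\pi(C)$; in particular $\pi\restriction_C$ can never ``separate uncountably many fibres''. Even granting that each fibre is completely metrizable, all this yields is that $C\cong\mathbb Q$ is a countable union of closed scattered subsets, which is no contradiction (singletons already witness this). So the $\mathbb Q$-transfer, as written, does not get started. The paper takes precisely the route you flag as the alternative: it shows $Ch(\mathcal K^0_{\mathcal B}(X))$ is $\beta$-unfavorable by converting a hypothetical winning strategy $\sigma^*$ for $\beta$ there into one for $\beta$ in $Ch(X)$. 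From $\sigma^*$'s move $(f_k,[f_k\restriction_{m_k}])$ one reads off $(x_k,V_k)$ with $x_k\in\bigcap_nf_k(n)$ and $V_k\subsetneq f_k(m_k-1)$; an $\alpha$-reply $U_k\ni x_k$ is translated, using that $(f_k(n))_n$ is a base at $x_k$, into the reply $[f_k\restriction_{n_k+1}]$ for some $n_k$ with $f_k(n_k)\subseteq U_k$. Splicing the segments $f_k\restriction_{m_k}$ along a run won by $\alpha$ in $Ch(X)$ produces an element of $\bigcap_kV_k^*$, and the BCO hypothesis (with the $V_k$ arranged strictly decreasing) is exactly what places this spliced sequence in $\mathcal K^0_{\mathcal B}(X)$. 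Since $Ch(X)$ is $\beta$-unfavorable by \cite[Corollary~3.8]{Zs1}, so is $Ch(\mathcal K^0_{\mathcal B}(X))$, and metrizability together with \cite[Corollary~3.9]{Zs1} then gives hereditary Baireness.
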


\begin{proof} For each $i\in I$, choose a BCO $\mathcal B_i$ for $X_i$ and prove that  {\it $\mathcal K^0_{\mathcal B_i}(X_i)$ is a  dense hereditarily Baire subspace of $\mathcal K(X_i)$}: as for density, take  a decreasing $h\in \mathcal B_i^k$, $k<\omega$, choose $g\in\mathcal K^0_{\mathcal B_i}(X_i)$ with $g(0)\subset h(k)$, and define
\[
f(m)=
\begin{cases}
h(m), \ &\text{if} \ m\le k,\\
g(m-k-1), \  &\text{if} \  m>k.
\end{cases}
\]
Then $f\in [h]\cap \mathcal K_{\mathcal B_i}^0(X_i)$, so $ \mathcal K_{\mathcal B_i}^0(X_i)$ is dense in  $\mathcal K(X_i)$,

To show that  $\mathcal K^0_{\mathcal B_i}(X_i)$ is a  hereditarily Baire space, we will use that, by \cite[Corollary 3.9]{Zs1}, in spaces with a BCO, hereditary Baireness is equivalent to $\beta$-unfavorability in the strong Choquet game: indeed,  assume that $\sigma_i^*$ is a winning strategy for $\beta$ in $Ch(\mathcal K_{\mathcal B_i}^0(X))$, and define a
strategy $\sigma_i$ for $\beta$ in $Ch(X_i)$ as follows: if $\sigma_i^*(\emptyset)=(f_0,V_0^*)$ for some $f_0\in\mathcal K_{\mathcal B_i}^0(X)$ and $V_0^*=[f_0\restriction_{m_0}]\cap \mathcal K_{\mathcal B_i}^0(X)$, where $m_0\ge 1$, then pick $x_0\in \bigcap_nf_0(n)$, choose $V_{0}\in\mathcal B_i$ so that $x_0\in V_{0}\subsetneq f_0(m_0-1)$, if
$f_0(m_0-1)$ is not a singleton, and $V_{0}=f_0(m_0-1)$, if $f_0(m_0-1)$ is a singleton, and define
$\sigma_i(\emptyset)=(x_0,V_0)$.
If $x_0\in U_0\subseteq V_0$ for some $U_0\in \mathcal B_i$,  let $n_0\ge m_0$ be such  that $f_0(n_0)\subseteq U_0$, and consider $\sigma_i^*([f_0\restriction_{n_0+1}]\cap \mathcal K_{\mathcal B_i}^0(X))=(f_1,V_1^*)$, where $f_1\in\mathcal K_{\mathcal B_i}^0(X)$ and $V_1^*=[f_1\restriction_{m_1}]\cap \mathcal K_{\mathcal B_i}^0(X)$ for some $m_1\ge n_0+1$. Pick $x_1\in\bigcap_n f_1(n)$, choose $V_{1}\in\mathcal B_i$ so that $x_1\in V_{1}\subsetneq f_1(m_1-1)$, if$f_1(m_1-1)$ is not a singleton, and $V_{1}=f_1(m_1-1)$, if $f_1(m_1-1)$ is a singleton, and define $\sigma_i(U_0)=(x_1,V_1)$.
Proceeding inductively, assume that for a given $k\ge 1$ and all $j< k$, $\sigma_i(U_0,\dots,U_j)=(x_{j+1},V_{j+1})$ has been defined, along with $V_{j+1}^*=[f_{j+1}\restriction_{m_{j+1}}]\cap \mathcal K_{\mathcal B_i}^0(X)$ and $m_{j+1}>n_j\ge m_j$ so that $f_j(n_j)\subseteq U_j$
and  $\sigma_i^*([f_0\restriction_{n_0+1}]\cap \mathcal K_{\mathcal B_i}^0(X),\dots,[f_j\restriction_{n_j+1}]\cap \mathcal K_{\mathcal B_i}^0(X))=(f_{j+1},V_{j+1}^*)$, where   $V_j$ is either a singleton or a proper subset of $f_j(m_j-1)$. Let $U_k\in\mathcal B_i$ be such that $x_k\in U_k\subseteq V_k$, and find $n_k\ge m_k$ such that $f_k(n_k)\subseteq U_k$
Consider
\[
\sigma_i^*([f_0\restriction_{n_0+1}]\cap \mathcal K_{\mathcal B_i}^0(X),\dots,[f_k\restriction_{n_k+1}]\cap \mathcal K_{\mathcal B_i}^0(X))=(f_{k+1},V_{k+1}^*),
\]
where $f_{k+1}\in\mathcal K_{\mathcal B_i}^0(X)$ and $V_{k+1}^*=[f_{k+1}\restriction_{m_{k+1}}]\cap \mathcal K_{\mathcal B_i}^0(X)$ for some $m_{k+1}\ge n_k+1$.
Pick $x_{k+1}\in\bigcap_n f_{k+1}(n)$, choose $V_{k+1}\in\mathcal B_i$ so that $x_{k+1}\in V_{k+1}\subsetneq f_{k+1}(m_{k+1}-1)$, if
$f_{k+1}(m_{k+1}-1)$ is not a singleton, and $V_{k+1}=f_{k+1}(m_{k+1}-1)$, if $f_{k+1}(m_{k+1}-1)$ is a singleton,
and put $\sigma_i (U_0,\dots,U_k)=(x_{k+1},V_{k+1})$. We will show that $\sigma_i$ is a winning strategy for $\beta$ in $Ch(X_i)$: indeed, let
\[
(x_0,V_0),U_0,\dots,(x_k,V_k),U_k,\dots
\]
be a play of $Ch(X_i)$ compatible with $\sigma_i$, and assume  $\bigcap_k V_k\neq\emptyset$.  Define $f\in\downarrow\mathcal B_i^{\omega}$ as follows: for all $k<\omega$ and $m_{k-1}\le p< m_k$ put
$f(p)=f_k(p)$ (for completeness, let $m_{-1}=0$). Then $\bigcap_p f(p)=\bigcap_k f(m_k-1)=\bigcap_k f_k(m_k-1)\supseteq \bigcap_kV_k$, so $f\in \mathcal K_{\mathcal B_i}^0(X)$,  since by the construction of $\sigma_i$, $\{V_n:n<\omega\}$ is either a strictly decreasing sequence of elements of $\mathcal B_i$, or  a singleton.
Moreover, $f\restriction_{m_k}=f_k\restriction_{m_k}$, thus, $f\in \bigcap_k V_k^*$, which is impossible, since
\[
(f_0,V_0^*),[f_0\restriction_{n_0+1}]\cap \mathcal K_{\mathcal B_i}^0(X),\dots,(f_k,V_k^*),[f_k\restriction_{n_k+1}]\cap \mathcal K_{\mathcal B_i}^0(X),\dots
\]
is a play of $Ch(\mathcal K_{\mathcal B_i}^0(X))$ compatible with $\sigma_i^*$.

It now follows from \cite[Theorem 1.1]{ChP} that $\prod_{i\in I}\mathcal K^0_{\mathcal B_i}(X_i)$ is a Baire space, which is also dense in
$\prod_{i\in I} \mathcal K(X_i)$, which it turn implies that  $\prod_{i\in I} X_i$ is a Baire space by Theorem \ref{krominfi}.
\end{proof}

\end{document}